\theoremstyle{plain}
\newtheorem{thm}{Theorem}
\newtheorem{corollary}[thm]{Corollary}
\newtheorem{definition}[thm]{Definition}
\newtheorem{lemma}[thm]{Lemma}
\newtheorem{proposition}[thm]{Proposition}
\newtheorem{theorem}[thm]{Theorem}
\newtheorem{rem}[thm]{Remark}
\newtheorem{ex}[thm]{Example}
\newtheorem{defn}[thm]{Definition}
\newcommand{\N}{\mathbb{N}}
\newcommand{\R}{\mathbb{R}}
\newcommand{\Rn}{\mathbb{R}^n}
\DeclareMathOperator{\lip}{Lip\,\!}
\DeclareMathOperator{\SBilip}{SBilip\,\!}
\DeclareMathOperator{\conv}{conv\,\!}
\begin{document}

\title[Kirszbraun's theorem via an explicit formula]{Kirszbraun's theorem via an explicit formula}

\author{Daniel Azagra}
\address{ICMAT (CSIC-UAM-UC3-UCM), Departamento de An{\'a}lisis Matem{\'a}tico y Matem\'atica Aplicada,
Facultad Ciencias Matem{\'a}ticas, Universidad Complutense, 28040, Madrid, Spain.  {\bf DISCLAIMER:}
The first-named author is affiliated to Universidad Complutense de Madrid, but this does not mean this institution has offered him all the support he expected; on the contrary, the Biblioteca Complutense has hampered his research by restricting his access to many books.
}
\email{azagra@mat.ucm.es}

\author{Erwan Le Gruyer}
\address{INSA de Rennes \& IRMAR, 20, Avenue des Buttes de Co\"esmes, CS 70839
F-35708, Rennes Cedex 7, France}
\email{Erwan.Le-Gruyer@insa-rennes.fr}

\author{Carlos Mudarra}
\address{ICMAT (CSIC-UAM-UC3-UCM), Calle Nicol\'as Cabrera 13-15.
28049 Madrid, Spain}
\email{carlos.mudarra@icmat.es}

\date{Oct 01, 2018}

\keywords{Lipschitz function, Kirszbraun Theorem}

\subjclass[2010]{47H09, 52A41, 54C20}

\thanks{D. Azagra and C. Mudarra were partially supported by Grant MTM2015-65825-P and by the Severo Ochoa Program for Centres of Excellence in R\&D (Grant SEV-2015-0554).}

\begin{abstract}
Let $X,Y$ be two Hilbert spaces, $E$ a subset of $X$ and $G: E \to Y$ a Lipschitz mapping. A famous theorem of Kirszbraun's states that there exists $\widetilde{G} : X \to Y$ with $\widetilde{G}=G$ on $E$ and $\lip(\widetilde{G})=\lip(G).$ 
In this note we show that in fact the function
$$\widetilde{G}:=\nabla_Y(\textrm{conv}(g))( \cdot , 0), \qquad \text{where}
$$
$$
g(x,y) = \inf_{z \in E} \left\lbrace \langle G(z), y \rangle + \frac{\lip(G)}{2} \|(x-z,y)\|^2 \right\rbrace + \frac{\lip(G)}{2}\|(x,y)\|^2,
$$
defines such an extension.  We apply this formula to get an extension result for {\em strongly biLipschitz} mappings. Related to the latter, we also consider extensions of $C^{1,1}$ strongly convex functions.
\end{abstract}

\maketitle

\section{An explicit formula for Kirszbraun's theorem}

In 1934 M.D. Kirszbraun \cite{Kirszbraun} proved that, for every subset $E$ of $\R^n$ and every Lipschitz function $f:E\to\R^m$, there exists a Lipschitz extension $F:\R^n\to\R^m$ of $f$ such that $\textrm{Lip}(F)=\textrm{Lip}(f)$. Here $\textrm{Lip}(\varphi)$ denotes the Lipschitz constant of $\varphi$, that is, 
$$\textrm{Lip}(\varphi)=\sup_{x\neq y}\frac{\|\varphi(x)-\varphi(y)\|}{\|x-y\|}.$$
This theorem was generalized for Hilbert spaces $X, Y$ in place of $\R^n$ and $\R^m$ by F.A. Valentine \cite{Valentine} in 1945, and the result is often referred to as the Kirszbraun-Valentine theorem. The proof is rather nonconstructive, in the sense that it requires to use Zorn's lemma or transfinite induction at least in the nonseparable case. In the separable case the proof can be made by induction, considering a dense sequence $\{x_k\}$ in $X$ and  at each step managing to extend $f$ from $E\cup \{x_1, \ldots, x_m\}$ to $E\cup\{x_1, \ldots, x_{m+1}\}$ while preserving the Lipschitz constant of the extension by using Helly's theorem or intersection properties of families of balls, but still it is not clear what the extension looks like. Several other proofs and generalizations which are not constructive either have appeared in the literature; see \cite{Mickle, GrunbaumZarantonello, Federer, BL, ReichSimons, Bauschke2007, AkopyanTarasov}; apart from Zorn's lemma or induction these proofs are based on intersection properties of arbitrary families of balls, or on maximal extensions of non-expansive operators and Fitzpatrick functions. In 2008 H.H. Bauschke and X. Wang \cite{BauschkeWang} gave the first constructive proof of the Kirszbraun-Valentine theorem of which we are aware; they relied on their previous work \cite{BauschkeWangExtensionsOfOperators} on extension and representation of monotone operators and the Fitzpatrick function. See also \cite{AschenbrennerFischer}, where some of these techniques are used to construct definable versions of Helly's and Kirszbraun's theorems in arbitrary definably complete expansions of ordered fields. Finally, in 2015 E. Le Gruyer and T-V. Phan provided sup-inf explicit extension formulas for Lipschitz mappings between finite dimensional spaces by relying on Le Gruyer's solution to the minimal $C^{1,1}$ extension problem for $1$-jets; see \cite[Theorem 32 and 33]{LeGruyer2} and \cite{LeGruyer1}.

In this note we present a short proof of the Kirszbraun-Valentine theorem in which the extension is given by an explicit formula. This proof is based on our previous work concerning $C^{1,1}$ extensions of $1$-jets with optimal Lipschitz constants of the gradients \cite{ALMExplicit}. See \cite{DaniilidisHaddouLeGruyerLey} for an alternative construction of such $C^{1,1}$ extensions on the Hilbert space, and \cite{BrudnyiShvartsman,  Fefferman2005, Fefferman2006, FeffermanSurvey} for the much more difficult question of extending functions (as opposed to jets) to $C^{1,1}$ or $C^{m,1}$ functions on $\R^n$.

\medskip

If $X$ is a Hilbert space, $E \subset X$ is an arbitrary subset and $(f,G): E \to \R \times X$ is a $1$-jet on $E,$ we will say that $(f,G)$ satisfies condition $(W^{1,1})$ with constant $M>0$ on $E$ provided that
\begin{equation}\label{inequalityw11}
f(y) \leq f(x) + \tfrac{1}{2} \langle G(x)+G(y), y-x \rangle + \tfrac{M}{4} \|x-y\|^2 - \tfrac{1}{4M} \|G(x)-G(y)\|^2, \quad \text{for all} \quad x,y\in E. 
\end{equation}

In \cite{Wells, LeGruyer1} it was proved that condition $(W^{1,1})$ with constant $M>0$ is a necessary and sufficient condition on $f: E \to \R, \: G: E \to X$ for the existence of a function $F \in C^{1,1}(X)$ with $\lip(\nabla F) \leq M$ and such that $F=f$ and $\nabla F= G$ on $E.$  Here $\nabla F(x)$ denotes the gradient of $F$ at the point $x,$ that is, the unique vector $\nabla F(x) \in X$ for which $D F(x)(v) = \langle \nabla F(x), v \rangle$ for every $v\in X$, where $DF(x) \in X^*$ denotes the Fr\'{e}chet derivative of $F$ at the point $x.$ More recently, as a consequence of a similar extension theorem for $C^{1,1}$ convex functions, we have found an explicit formula for such an extension $F.$

\begin{theorem}\cite[Theorem 3.4]{ALMExplicit}\label{theoremformulaC11nonnecessaryconvex}
Let $E$ be a subset of a Hilbert space $X$. Given a $1$-jet $(f,G)$ satisfying condition $(W^{1,1})$ with constant $M$ on $E$, the formula
\begin{align*}
& F=\conv(g)- \tfrac{M}{2}\| \cdot \|^2,\\
& g(x) = \inf_{y \in E} \left\lbrace f(y)+\langle G(y), x-y \rangle + \tfrac{M}{2} \|x-y\|^2 \right\rbrace + \tfrac{M}{2}\|x\|^2 , \quad x\in X,
\end{align*}
defines a $C^{1,1}(X)$ function with $F_{|_E}=f$, $(\nabla F)_{|_E} =G$, and $\lip(\nabla F) \leq M$. 
\end{theorem}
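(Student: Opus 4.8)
The plan is to reduce the statement to two convexity assertions and then exploit the very simple algebraic shape of $g$. First, recall that $F:X\to\R$ belongs to $C^{1,1}(X)$ with $\lip(\nabla F)\le M$ if and only if both $\tfrac{M}{2}\|\cdot\|^2+F$ and $\tfrac{M}{2}\|\cdot\|^2-F$ are convex on $X$. Second, I would record the (easy) reformulation of condition $(W^{1,1})$: completing the square in \eqref{inequalityw11} and minimizing the resulting expression in $x$ shows that \eqref{inequalityw11} holds for all $x,y\in E$ if and only if
\[
h^-(x):=\sup_{z\in E}\Big\{f(z)+\langle G(z),x-z\rangle-\tfrac{M}{2}\|x-z\|^2\Big\}\ \le\ \inf_{y\in E}\Big\{f(y)+\langle G(y),x-y\rangle+\tfrac{M}{2}\|x-y\|^2\Big\}=:h^+(x)
\]
for every $x\in X$. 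Finally, expanding the squares inside these two infima/suprema one sees that $g=h^++\tfrac{M}{2}\|\cdot\|^2$ satisfies $g(x)=M\|x\|^2+\inf_{y\in E}\{\langle G(y)-My,x\rangle+c_y\}$ for suitable scalars $c_y$, so $M\|\cdot\|^2-g$ is a supremum of affine functions, hence convex; similarly $h^-+\tfrac{M}{2}\|\cdot\|^2$ is a supremum of affine functions, hence convex. Since $h^+>-\infty$ and $h^+\ge h^-$ pointwise (by $(W^{1,1})$), $g$ is bounded below by the real-valued convex function $h^-+\tfrac{M}{2}\|\cdot\|^2$, so $\conv(g)$ is a well-defined real-valued convex function sandwiched between $h^-+\tfrac{M}{2}\|\cdot\|^2$ and $g$.

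Granting this, identifying $F$ on $E$ is immediate. From $\conv(g)\le g$ we get $F=\conv(g)-\tfrac{M}{2}\|\cdot\|^2\le h^+$; and since $h^-+\tfrac{M}{2}\|\cdot\|^2$ is a convex minorant of $g$, it is dominated by $\conv(g)$, whence $h^-\le F$. Thus $h^-\le F\le h^+$ on $X$. For $z_0\in E$, taking $y=z_0$ (resp. $z=z_0$) in the definition of $h^+$ (resp. $h^-$) gives $h^+(z_0)\le f(z_0)\le h^-(z_0)$, so the sandwich forces $F(z_0)=f(z_0)$. Moreover, for every $x\in X$,
\[
f(z_0)+\langle G(z_0),x-z_0\rangle-\tfrac{M}{2}\|x-z_0\|^2\ \le\ F(x)\ \le\ f(z_0)+\langle G(z_0),x-z_0\rangle+\tfrac{M}{2}\|x-z_0\|^2,
\]
because $h^-$ (resp. $h^+$) is bounded below (resp. above) by the corresponding paraboloid centered at $z_0$; this squeeze shows that $F$ is Fr\'echet differentiable at $z_0$ with $\nabla F(z_0)=G(z_0)$.

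It remains to prove $\lip(\nabla F)\le M$, which by the first reformulation means checking that $\tfrac{M}{2}\|\cdot\|^2+F=\conv(g)$ and $\tfrac{M}{2}\|\cdot\|^2-F=M\|\cdot\|^2-\conv(g)$ are both convex. The first is automatic. For the second I would show that $\conv(g)$ is $2M$-semiconcave, i.e. $\conv(g)(x+v)+\conv(g)(x-v)-2\conv(g)(x)\le 2M\|v\|^2$ for all $x,v\in X$, which is precisely convexity of $M\|\cdot\|^2-\conv(g)$. The key observation is that $g$ itself is $2M$-semiconcave (this is just the convexity of $M\|\cdot\|^2-g$ noted above) and that \emph{semiconcavity with a fixed modulus passes to the convex envelope}: using the description $\conv(g)(x)=\inf\{\sum_i\lambda_i g(p_i):\ \lambda_i\ge0,\ \sum_i\lambda_i=1,\ \sum_i\lambda_i p_i=x\}$, for an $\varepsilon$-optimal representation of $x$ we have $x\pm v=\sum_i\lambda_i(p_i\pm v)$, hence $\conv(g)(x+v)+\conv(g)(x-v)\le\sum_i\lambda_i\big(g(p_i+v)+g(p_i-v)\big)\le\sum_i\lambda_i\big(2g(p_i)+2M\|v\|^2\big)\le 2\conv(g)(x)+2M\|v\|^2+2\varepsilon$, and letting $\varepsilon\to0$ gives the claim. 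This yields the second convexity and completes the argument.

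The points requiring care are purely functional-analytic and routine: that $\conv(g)$ coincides with the above infimum over finite convex combinations (legitimate because $g$ is bounded below by an affine function), and that this $\conv(g)$ — a convex function locally bounded above by the finite upper semicontinuous function $g$ — is continuous, so that "differentiable at each point of $E$" together with $\lip(\nabla F)\le M$ genuinely upgrades $F$ to a function in $C^{1,1}(X)$. The only substantive step, and the one I expect to be the crux, is the stability of semiconcavity under the convex envelope in the last paragraph; everything else is bookkeeping organized around the two reformulations.
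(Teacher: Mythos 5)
Your proof is correct and follows essentially the same blueprint as the cited source \cite{ALMExplicit}: the Wells-type reformulation of $(W^{1,1})$ as the sandwich $h^-\le h^+$, the observation that $g=h^++\tfrac{M}{2}\|\cdot\|^2$ is $2M$-semiconcave so that its convex envelope is trapped between the two quadratic envelopes and inherits the semiconcavity modulus, and the pinching at points of $E$ to read off the value and gradient. The one ingredient you invoke as a ``recall'' but is worth making explicit is the implication ``$\tfrac{M}{2}\|\cdot\|^2\pm F$ both convex $\Rightarrow$ $F\in C^{1,1}(X)$ with $\lip(\nabla F)\le M$'': the easy argument (monotonicity of $\nabla(\tfrac{M}{2}\|\cdot\|^2\pm F)$) only yields $|\langle\nabla F(x)-\nabla F(y),x-y\rangle|\le M\|x-y\|^2$, and upgrading this to $\|\nabla F(x)-\nabla F(y)\|\le M\|x-y\|$ requires the Baillon--Haddad cocoercivity argument (or, equivalently, an appeal to a result such as \cite[Theorem 2.3]{ALMExplicit}, which the present paper quotes a few lines above the theorem). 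Also, as you correctly anticipate, identifying $\conv(g)$ with the finite-convex-combination infimum is legitimate here because that infimum is a real-valued convex function bounded above by the locally bounded $g$, hence continuous and lsc, so it coincides with $g^{**}$. Modulo these routine justifications the argument is complete and aligns with the paper's approach.
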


Here $\textrm{conv}(g)$ denotes the convex envelope of $g$, defined by
\begin{equation}
\textrm{conv}(g)(x)=\sup\{ h(x) \, : \, h \textrm{ is convex, proper and lower semicontinuous, } h\leq g\}.
\end{equation}
Another expression for $\textrm{conv}(g)$ is given by
\begin{equation}\label{convex envelope as an inf}
\textrm{conv}(g)(x)=\inf\left\lbrace \sum_{j=1}^{k}\lambda_{j} g(x_j) \, : \, \lambda_j\geq 0,
\sum_{j=1}^{k}\lambda_j =1, \, x=\sum_{j=1}^{k}\lambda_j x_j, \, k\in\N \right\rbrace,
\end{equation}
and also by the Fenchel biconjugate of $g$, that is,
\begin{equation}
\textrm{conv}(g)=g^{**},
\end{equation}
where 
\begin{equation}
h^{*}(x):=\sup_{v\in X}\{\langle v, x\rangle -h(v)\};
\end{equation}
see \cite[Proposition 4.4.3]{BorweinVanderwerffbook} for instance. In the case that $X$ is finite dimensional, say $X=\R^n$, the expression \eqref{convex envelope as an inf} can be made simpler: by using Carath\'eodory's Theorem one can show that it is enough to consider convex combinations of at most $n+1$ points. That is to say, if $g:\R^n\to\R$ then
\begin{equation}
\textrm{conv}(g)(x)=\inf\left\lbrace \sum_{j=1}^{n+1}\lambda_{j} g(x_j) \, : \, \lambda_j\geq 0,
\sum_{j=1}^{n+1}\lambda_j =1, \, x=\sum_{j=1}^{n+1}\lambda_j x_j \right\rbrace;
\end{equation}
see \cite[Corollary 17.1.5]{Rockafellar} for instance.

\medskip

In general, the convex envelope does not preserve smoothness of orders higher than $C^1$ and $C^{1,1}.$ For instance, the function $g(x,y)=\sqrt{x^2+e^{-y^2}}$ defined on $\R^2$ is real analytic and its convex envelope is $\conv(g)(x,y)=|x|$ for every $(x,y) \in \R^2;$ see \cite{BenoistUrruty}. In \cite{KirchheimKristensen}, Kirchheim and Kristensen proved that the convex envelope of a differentiable function $g$ on $\R^n$ is of class $C^1,$ provided that $g$ is coercive. On the other hand, if $g$ is of class $C^{1,1}$ on a Hilbert space $X,$ or even if $g$ only satisfies
$$
g(x+h)+g(x-h)-2g(x) \leq M \|h\|^2, \quad x,h \in X
$$
for some $M>0,$ then $\conv(g)$ is of class $C^{1,1}$ and $\lip(\nabla \conv(g)) \leq M;$ see \cite[Theorem 2.3]{ALMExplicit}. In particular, if $g= \inf_i (g_i)$ is the infimum of an arbitrary family of parabolas $g_i,$ whose second derivatives are uniformly bounded by a constant $M>0,$ then $\conv(g)$ is of class $C^{1,1}$ with $\lip(\nabla \conv (g) ) \leq M,$ provided that $g$ has a convex lower semicontinuous minorant. However, $\conv(g)$ is not necessarily of class $C^2$ even when $g$ is the minimum of two parabolas: if we define $g(x)= \min \lbrace x^2, (x-1)^2 \rbrace$ for $x\in \R,$ then $\conv(g)(x)=x^2$ for $x \leq 0, \, \conv(g)=0$ for $0\leq x \leq 1$ and $\conv(g)=(x-1)^2$ for $x\geq 1;$ and therefore $\conv(g) \in C^{1,1}(\R) \setminus C^2(\R).$

\begin{theorem}[Kirszbraun's theorem via an explicit formula]\label{maintheoremkirszbraunformula}
Let $X,Y$ be two Hilbert spaces, $E$ a subset of $X$ and $G: E \to Y$ a Lipschitz mapping. There exists $\widetilde{G} : X \to Y$ with $\widetilde{G}=G$ on $E$ and $\lip(\widetilde{G})=\lip(G).$ In fact, if $M=\lip(G),$ then the function
$$\widetilde{G}(x):= \nabla_Y(\conv(g))(x,0), \quad x\in X, \quad \text{where}
$$
$$
g(x,y) = \inf_{z \in E} \left\lbrace \langle G(z), y \rangle_Y + \tfrac{M}{2} \|x-z\|_X^2 \right\rbrace + \tfrac{M}{2}\|x\|_X^2 + M \|y \|^2_Y, \quad (x,y) \in X \times Y,
$$
defines such an extension.
\end{theorem}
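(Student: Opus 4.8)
The plan is to deduce the statement directly from Theorem~\ref{theoremformulaC11nonnecessaryconvex} via the classical device that reduces Lipschitz extension to $C^{1,1}$ extension of an auxiliary $1$-jet. Write $M=\lip(G)$ (if $M=0$ the statement is trivial, so assume $M>0$), set $Z=X\times Y$ viewed as a Hilbert space with $\|(x,y)\|^2=\|x\|_X^2+\|y\|_Y^2$, put $E'=E\times\{0\}\subset Z$, and consider on $E'$ the $1$-jet $(\widehat f,\widehat G)$ defined by $\widehat f\equiv 0$ and $\widehat G(z,0)=(0,G(z))\in X\times Y=Z$.

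The first step is to check that $(\widehat f,\widehat G)$ satisfies condition $(W^{1,1})$ with constant $M$ on $E'$. Substituting $\widehat f\equiv 0$ and $\widehat G(z,0)=(0,G(z))$ into \eqref{inequalityw11}, the terms involving $\widehat f$ vanish, and the inner product $\tfrac12\langle\widehat G(z,0)+\widehat G(z',0),(z,0)-(z',0)\rangle$ vanishes too because the $X$-components of the gradients and the $Y$-component of $(z-z',0)$ are zero; thus \eqref{inequalityw11} collapses to $\|G(z)-G(z')\|_Y^2\le M^2\|z-z'\|_X^2$, which holds by the definition of $M$. Hence Theorem~\ref{theoremformulaC11nonnecessaryconvex} applies on $Z$ and produces $F=\conv(\widetilde g)-\tfrac{M}{2}\|\cdot\|^2\in C^{1,1}(Z)$ with $F_{|_{E'}}=0$, $(\nabla F)_{|_{E'}}=\widehat G$, and $\lip(\nabla F)\le M$, where
$$
\widetilde g(x,y)=\inf_{z\in E}\left\{\langle\widehat G(z,0),(x-z,y)\rangle+\tfrac{M}{2}\|(x-z,y)\|^2\right\}+\tfrac{M}{2}\|(x,y)\|^2 .
$$

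The next step is to unwind this formula. Using $\langle\widehat G(z,0),(x-z,y)\rangle=\langle G(z),y\rangle_Y$, together with $\|(x-z,y)\|^2=\|x-z\|_X^2+\|y\|_Y^2$ and $\|(x,y)\|^2=\|x\|_X^2+\|y\|_Y^2$, a direct computation shows that $\widetilde g$ coincides exactly with the function $g$ in the statement; this is pure bookkeeping, the only point worth noting being that $g$ is real-valued, since for a fixed $z_0\in E$ the quadratic term $\tfrac{M}{2}\|x-z\|_X^2$ dominates the linear-in-$z$ growth of $z\mapsto\langle G(z),y\rangle_Y$ (controlled by $\lip(G)$), so the infimum is finite. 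Because $F(x,y)=\conv(g)(x,y)-\tfrac{M}{2}(\|x\|_X^2+\|y\|_Y^2)$ and the last summand does not depend on $y$ once $y=0$, differentiability of $F$ (inherited from the cited theorem) gives $\nabla_Y F(x,0)=\nabla_Y\conv(g)(x,0)=\widetilde G(x)$.

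Finally I would read off the conclusion. For $x\in E$ one has $\nabla F(x,0)=\widehat G(x,0)=(0,G(x))$, so $\widetilde G(x)=\nabla_Y F(x,0)=G(x)$ and $\widetilde G$ extends $G$. Moreover $\widetilde G=P_Y\circ\nabla F\circ\iota$, where $\iota\colon X\to Z$, $\iota(x)=(x,0)$, is a linear isometry and $P_Y\colon Z\to Y$ is the orthogonal projection, which is $1$-Lipschitz; hence $\lip(\widetilde G)\le\lip(\nabla F)\le M=\lip(G)$, and since $\widetilde G$ extends $G$ the reverse inequality is automatic, giving $\lip(\widetilde G)=\lip(G)$. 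There is no genuinely hard step here: all the analytic content has been exported to Theorem~\ref{theoremformulaC11nonnecessaryconvex}, and the only mildly delicate observation is that for the particular jet $(\widehat f,\widehat G)$ above, condition $(W^{1,1})$ with constant $M$ is equivalent to $G$ being $M$-Lipschitz.
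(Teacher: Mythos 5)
Your proposal is correct and is essentially the paper's own proof: you embed the problem in $Z=X\times Y$, define the $1$-jet $(0,(0,G))$ on $E\times\{0\}$, observe that condition $(W^{1,1})$ with constant $M$ for this jet reduces exactly to $M$-Lipschitzness of $G$ (the inner-product term vanishes by orthogonality of the $X$- and $Y$-components), invoke Theorem~\ref{theoremformulaC11nonnecessaryconvex}, and read off $\widetilde G=\nabla_Y\conv(g)(\cdot,0)$ after the same algebraic simplification. Your added remarks on finiteness of $g$ and the $1$-Lipschitz factorization $\widetilde G=P_Y\circ\nabla F\circ\iota$ are minor amplifications of steps the paper leaves implicit, not a different route.
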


Here $\| \cdot \|_X$ and $\| \cdot \|_Y$ denote the norm on $X$ and $Y$ respectively. Also, the inner products in $X$ and $Y$ are denoted by $\langle \cdot , \cdot \rangle_X$ and $\langle \cdot, \cdot \rangle_Y$ respectively. For any function $F,$ $\nabla_Y F$ will stand for the $Y$-\textit{partial derivatives} of $F,$ that is, the canonical projection from $X \times Y$ onto $Y$ composed with $\nabla F$.
\begin{proof}
We consider on $ X \times Y$ the norm given by $\|(x,y) \| = \sqrt{\| x\|_X^2 + \| y\|_Y^2}$ for every $(x,y) \in X \times Y.$ Then $X \times Y$ is a Hilbert space whose inner product is $\langle (x,y), (x',y') \rangle = \langle x,x' \rangle_X + \langle y,y' \rangle_Y$ for every $(x,y),  (x',y') \in X \times Y.$ We define the $1$-jet $(f^*,G^*)$ on $E \times \lbrace 0 \rbrace\subset X\times Y$ by $f^*(x,0)=0$ and $G^*(x,0)=(0,G(x))$. Then the jet $(f^*,G^*)$ satisfies condition $(W^{1,1})$ on $E \times \lbrace 0 \rbrace$ with constant $M$ (see inequality \eqref{inequalityw11}). Indeed, by the definition of $f^*$ and $G^*$ we can write, for every $(x,0), (y,0) \in E \times \lbrace 0 \rbrace,$ 
\begin{align*}
f^*(x,0)& -f^*(y,0) \\
& + \tfrac{1}{2} \big \langle G^*(x,0)+G^*(y,0), (y,0)-(x,0) \big \rangle + \tfrac{M}{4} \|(x,0)-(y,0)\|^2 - \tfrac{1}{4M} \|G^*(x,0)-G^*(y,0)\|^2 \\
& = \tfrac{1}{2} \big \langle (0,G(x))+(0,G(y)), (y,0)-(x,0) \big \rangle + \tfrac{M}{4} \|x-y\|_X^2 - \tfrac{1}{4M} \|G(x)-G(y)\|_Y^2 \\
& = \tfrac{M}{4} \|x-y\|_X^2 - \tfrac{1}{4M} \|G(x)-G(y)\|_Y^2,
\end{align*}
and the last term is nonnegative because $G$ is $M$-Lipschitz on $E.$

Therefore, Theorem \ref{theoremformulaC11nonnecessaryconvex} asserts that the function $F$ defined by 
\begin{align*}
& F=\textrm{conv}(g)- \tfrac{M}{2}\| \cdot \|^2, \quad \text{where} \quad \\
& g(x,y)=\inf_{z \in E} \left\lbrace f^*(z,0)+\langle G^*(z,0), (x-z,y) \rangle + \tfrac{M}{2} \|(x-z,y)\|^2 \right\rbrace + \tfrac{M}{2}\|(x,y)\|^2,
\end{align*}
is of class $C^{1,1}(X \times Y)$ with $(F, \nabla F)=(f^*,G^*)$ on $E \times \lbrace 0 \rbrace$ and $\lip(\nabla F) \leq M.$ In particular, the mapping $X \ni x \mapsto \widetilde{G}(x): =\nabla_Y F(x,0) \in Y$ is $M$-Lipschitz and extends $G$ from $E$ to $X.$ Finally, the expressions defining $\widetilde{G}$ and $g$ can be simplified as
$$
\widetilde{G}(x) = \nabla_Y \left( \textrm{conv}(g)- \tfrac{M}{2}\| \cdot \|^2 \right) (x,0) = \nabla_Y(\textrm{conv}(g))(x,0) - \nabla_Y \left( \tfrac{M}{2}\| \cdot \|^2 \right)(x,0) = \nabla_Y(\textrm{conv}(g))(x,0)
$$
and
$$
g(x,y) =  \inf_{z \in E} \left\lbrace \langle G(z), y \rangle_Y + \tfrac{M}{2} \|x-z\|_X^2 \right\rbrace + \tfrac{M}{2}\|x\|_X^2 + M \|y \|^2_Y.
$$
\end{proof}

\medskip

Let $X$ be a Hilbert space with inner product and associated norm denoted by $\langle \cdot, \cdot \rangle$ and $\| \cdot \|$ respectively. If $E \subset X$ is arbitrary and $G:E \to X$ is a mapping, we say that $G$ is firmly non-expansive if
$$
\langle G(x)-G(y), x-y \rangle \geq \| G(x)-G(y)\|^2 \quad \text{for all} \quad x,y \in E.
$$

Important examples of firmly non-expansive mappings are the metric projections onto closed convex sets of Hilbert spaces and the \textit{proximal mappings} $\textrm{prox}_f:X \to X$ of proper lower semicontinuous convex functions $f: X \to (-\infty, + \infty];$ see \cite[Chapter 12]{BauschkeCombettesbook}. Firmly non-expansive mappings arise naturally in \textit{convex feasibility problems} too: given a family $C_1,\ldots, C_N$ of closed convex sets of a Hilbert space, find a point $x \in \bigcap_i C_i.$ Also, these mappings are known to be \textit{resolvents} $J_A=(A+ I)^{-1}$ of monotone or maximally monotone operators $A: X \rightrightarrows X,$ and they play a crucial role in the following basic problem that arises in several branches of applied mathematics: given a maximally monotone operator $A: X \rightrightarrows X,$ find a point $x\in X$ with $0 \in Ax.$ For more information about firmly non-expansive mappings and their applications; see \cite{Bauschke2007, BauschkeCombettesbook, BauschkeMoffatWang, BauschkeWang} and the references therein.

\medskip

It is well-known that a mapping $G:E \to X$ is firmly non-expansive if and only $2G-I:E \to X$ is $1$-Lipschitz, where $I$ denotes the identity map; see \cite[Proposition 4.2]{BauschkeCombettesbook} for a proof of this fact. Using this characterization and Theorem \ref{maintheoremkirszbraunformula} we obtain the following corollary.

\begin{corollary}
Let $G: E \to X$ be a firmly non-expansive mapping defined on a subset $E$ of a Hilbert space $X.$ Then $G$ can be extended to a firmly non-expansive mapping $\widetilde{G} : X \to X$ by means of the formula
$$
\widetilde{G}(x):= \tfrac{1}{2} \left( P_2 \left( \nabla(\conv(g))(x,0) \right)+ x \right) \quad x\in X, \quad \text{where}
$$
$$
P_2(x,y)=y, \quad (x,y) \in X \times X, \quad \text{and}
$$
$$
g(x,y) = \inf_{z \in E} \left\lbrace 2 \langle G(z), y \rangle + \tfrac{1}{2} \|z-(x+y)\|^2 \right\rbrace + \tfrac{1}{2} \|x-y\|^2 , \quad (x,y) \in X \times X.
$$
\end{corollary}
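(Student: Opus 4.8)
The plan is to deduce the corollary from Theorem~\ref{maintheoremkirszbraunformula} via the equivalence ``$G$ firmly non-expansive $\iff$ $2G-I$ is $1$-Lipschitz'' recalled above. Set $H:=2G-I:E\to X$. Since $G$ is firmly non-expansive, $H$ is $1$-Lipschitz on $E$, hence $\lip(H)\le 1$; moreover the $1$-jet $(f^*,G^*)$ on $E\times\{0\}$ attached to $H$ exactly as in the proof of Theorem~\ref{maintheoremkirszbraunformula} (that is, $f^*\equiv 0$ and $G^*(x,0)=(0,H(x))$) satisfies condition $(W^{1,1})$ with constant $M=1$, because $\tfrac14\|x-y\|^2-\tfrac14\|H(x)-H(y)\|^2\ge 0$ for all $x,y\in E$, whether or not $\lip(H)$ equals $1$. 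Running the argument of that theorem with $M=1$ therefore produces a $1$-Lipschitz mapping $\widetilde H:X\to X$ with $\widetilde H=H$ on $E$ and
$$
\widetilde H(x)=\nabla_Y(\conv(g_H))(x,0)=P_2\big(\nabla(\conv(g_H))(x,0)\big),\qquad
g_H(x,y)=\inf_{z\in E}\Big\{\langle H(z),y\rangle+\tfrac12\|x-z\|^2\Big\}+\tfrac12\|x\|^2+\|y\|^2,
$$
where, as in that proof, the summand $-\tfrac{M}{2}\|\cdot\|^2$ contributes nothing to the $Y$-partial derivative along $X\times\{0\}$.

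Next I would set $\widetilde G:=\tfrac12(\widetilde H+I)$. Then $2\widetilde G-I=\widetilde H$ is $1$-Lipschitz on all of $X$, so $\widetilde G$ is firmly non-expansive by the quoted equivalence; and on $E$ we have $\widetilde G=\tfrac12(H+I)=\tfrac12\big((2G-I)+I\big)=G$, so $\widetilde G$ extends $G$. Substituting the displayed formula for $\widetilde H$ gives $\widetilde G(x)=\tfrac12\big(P_2(\nabla(\conv(g_H))(x,0))+x\big)$, which is precisely the asserted formula as soon as one checks that $g_H$ coincides with the function $g$ written in the statement.

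The only genuine computation, and the step requiring the most care, is this identification $g=g_H$; it is a routine completion of squares. Expanding $\tfrac12\|z-(x+y)\|^2=\tfrac12\|z\|^2-\langle z,x\rangle-\langle z,y\rangle+\tfrac12\|x\|^2+\langle x,y\rangle+\tfrac12\|y\|^2$, and using $2\langle G(z),y\rangle-\langle z,y\rangle=\langle H(z),y\rangle$ together with $\tfrac12\|z\|^2-\langle z,x\rangle+\tfrac12\|x\|^2=\tfrac12\|x-z\|^2$, the expression under the infimum defining $g$ becomes $\langle H(z),y\rangle+\tfrac12\|x-z\|^2+\langle x,y\rangle+\tfrac12\|y\|^2$; taking the infimum over $z\in E$ pulls out the $z$-independent summand $\langle x,y\rangle+\tfrac12\|y\|^2$, and adding the remaining term $\tfrac12\|x-y\|^2=\tfrac12\|x\|^2-\langle x,y\rangle+\tfrac12\|y\|^2$ cancels the cross term $\langle x,y\rangle$ and leaves exactly $g_H(x,y)$. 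This identification completes the proof.
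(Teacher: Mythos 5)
Your proof is correct and follows exactly the approach the paper indicates: deduce the corollary from Theorem~\ref{maintheoremkirszbraunformula} via the equivalence between firm non-expansiveness of $G$ and $1$-Lipschitzness of $H=2G-I$, applying the formula with $M=1$ and then passing back to $\widetilde{G}=\tfrac12(\widetilde H + I)$. The completion-of-squares identification $g=g_H$ is the right computation and is carried out correctly; the paper leaves it implicit, and you have also correctly noted the minor point that the $(W^{1,1})$ condition with constant $1$ holds even if $\lip(H)<1$, so the formula is applicable.
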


\section{Extensions of strongly biLipschitz mappings}

In this section we consider {\em strongly biLipschitz} mappings, which appear naturally as derivatives of strongly convex $C^{1,1}$ functions, and we provide an extension result for this class of mappings.

\begin{defn}\label{defintionstronglybilipschitz}
{\em Let $E$ be a subset of a Hilbert space $X$. We say that a mapping $G:E\to X$ is {\em strongly biLipschitz} provided that
$$
\mbox{SBilip}(G):=\inf_{x,y\in E; \, x\neq y}\frac{2\langle x-y, G(x)-G(y)\rangle}{\|x-y\|^2+\|G(x)-G(y)\|^2}>0.
$$
}
\end{defn}

Strongly biLipschitz mappings may be interesting in regard to some problems in computer vision or image processing where one needs to match points in $\R^n$: for instance, given two sets of points in $\R^n$ with equal cardinality, find a homeomorphism from $\R^n$ onto itself which does not distort distances too much and takes one set onto the other. Supposing that the data satisfy the strongly biLipschitz condition, our explicit formula for such an extension can be useful. Also, in \cite[Corollary 4.5]{BauschkeMoffatWang} it is shown that strongly biLipschitz mappings are closely related to contractive mappings: a maximally monotone mapping $G$ is strongly biLipschitz if and only if its \textit{reflected resolvent} $N= 2(G+I)^{-1}-I $ is a contractive mapping. See \cite[Chapter 12]{RockafellarWets} for more information about resolvent mappings and maximally monotone operators.

It should also be noted that it is not generally true that a biLipschitz mapping whose domain and range is a subset of the same Hilbert space $X$ extends to a total one-to-one continuous mapping, as shown by the following example.
\begin{ex}
{\em Let $| \cdot |$ denote the euclidean norm on $\R^n.$ Let $C=\{x\in\R^n : |x|=1\}\cup\{p\}$, $p$ be any point with $|p|>1$, and $g:C\to\R^n$ be defined by $g(x)=x$ for $|x|=1$ and $g(p)=0$. Then both $g$ and $g^{-1}$ are Lipschitz but no continuous extension of $g$ to $\R^n$ can be one-to-one.}
\end{ex}
However, this is true for the class of strongly biLipschitz mappings, and moreover, the extension can be performed without increasing what seems natural to call the \textit{strong biLipschitz constant} $\mbox{SBilip}(G),$ as we will show by using the extension formula for Lipschitz mappings given by Theorem \ref{maintheoremkirszbraunformula}.

\begin{proposition}
{\em If $G:E\to X$ is strongly biLipschitz then $G$ is biLipschitz.}
\end{proposition}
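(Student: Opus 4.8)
The plan is to extract from the definition of $\mathrm{SBilip}(G)>0$ a two-sided estimate relating $\|x-y\|$ and $\|G(x)-G(y)\|$. Set $c=\mathrm{SBilip}(G)\in(0,1]$ (it is automatically at most $1$ since $2ab\le a^2+b^2$ forces the quotient to be bounded by $1$ whenever the numerator is nonnegative, and by Cauchy--Schwarz the numerator never exceeds the denominator). The defining inequality says that for all $x\neq y$ in $E$,
\[
2\langle x-y,\, G(x)-G(y)\rangle \;\ge\; c\big(\|x-y\|^2+\|G(x)-G(y)\|^2\big).
\]
First I would apply Cauchy--Schwarz to the left-hand side to get $2\|x-y\|\,\|G(x)-G(y)\| \ge c\big(\|x-y\|^2+\|G(x)-G(y)\|^2\big)$. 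Writing $a=\|x-y\|$ and $b=\|G(x)-G(y)\|$, this reads $cb^2 - 2ab + ca^2 \le 0$, a quadratic inequality in $b$ whose roots are $b = \frac{a}{c}\big(1\pm\sqrt{1-c^2}\big)$. Hence $b$ lies between these two roots, which gives simultaneously the Lipschitz bound $\|G(x)-G(y)\| \le \frac{1+\sqrt{1-c^2}}{c}\|x-y\|$ and, since the product of the roots is $a^2$, the co-Lipschitz bound $\|G(x)-G(y)\| \ge \frac{1-\sqrt{1-c^2}}{c}\|x-y\| = \frac{c}{1+\sqrt{1-c^2}}\|x-y\| > 0$.

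The second estimate already shows $G$ is injective with $\|x-y\| \le \frac{1+\sqrt{1-c^2}}{c}\|G(x)-G(y)\|$, so $G^{-1}:G(E)\to E$ is Lipschitz with the same constant $L := \frac{1+\sqrt{1-c^2}}{c}$ as $G$ itself. Therefore $G$ is biLipschitz (with biLipschitz constant $L$). I would finish by recording the clean symmetric conclusion: both $\mathrm{Lip}(G)\le L$ and $\mathrm{Lip}(G^{-1})\le L$, where $L$ depends only on $c=\mathrm{SBilip}(G)$; in particular when $c=1$ one recovers $L=1$, i.e.\ $G$ is an isometry, which is consistent with the fact that $\mathrm{SBilip}(G)=1$ forces equality in Cauchy--Schwarz and in $2ab\le a^2+b^2$.

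There is no real obstacle here; the only point requiring a moment's care is the direction of the inequality coming out of the quadratic — one must check that $b$ is squeezed \emph{between} the two positive roots (not outside them), which follows because the leading coefficient $c$ of $cb^2-2ab+ca^2$ is positive, so the parabola in $b$ opens upward and is $\le 0$ exactly on the interval between its roots. Everything else is a direct manipulation of the definition together with Cauchy--Schwarz.
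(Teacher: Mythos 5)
Your argument is correct and is essentially the same as the paper's: both apply Cauchy--Schwarz to the numerator, reduce the defining inequality to the quadratic $c\lambda^2-2\lambda+c\le 0$ (the paper writes it as $\lambda^2-\tfrac{2}{\alpha}\lambda+1\le 0$, which is the same after dividing by $c$) in the ratio $\lambda=\|G(x)-G(y)\|/\|x-y\|$, and read off the two-sided bound $\tfrac1K\le\lambda\le K$ with $K=\tfrac{1+\sqrt{1-c^2}}{c}$. Your explicit remark that $c\le 1$ and your check that the parabola opens upward (so $\lambda$ is squeezed \emph{between} the roots) are small clarifications of points the paper leaves implicit.
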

\begin{proof}
For every $x, y\in E$ we have
\begin{equation}\label{1.2}
2\|x-y\| \|G(x)-G(y)\| \geq 2 \langle x-y, G(x)-G(y)\rangle \geq \alpha \left(\|x-y\|^2+ \|G(x)-G(y)\|^2 \right), 
\end{equation}
where $\alpha=\mbox{SBilip}(G)$ is as in Definition \ref{defintionstronglybilipschitz}. It follows that $G$ is one-to-one. Note that we can write \eqref{1.2} in the equivalent form 
\begin{equation}\label{1.3}
 \big \|G(x)-G(y) - \tfrac{1}{\alpha}(x-y) \big \|^2 \leq \tfrac{1-\alpha^2}{\alpha^2}\|x-y\|^2.
\end{equation}
Setting $\lambda := \dfrac{\|G(x)-G(y)\|}{\|x-y\|}$ for $x \neq y$, \eqref{1.2} holds if and only if
\begin{equation}\label{1.33}
 \lambda^2 - \tfrac{2}{\alpha} \lambda +1 \leq 0,
\end{equation}
which is equivalent to $ \tfrac{1}{K} \leq \lambda \leq K;$ where 
$
K = \tfrac{1}{\alpha} + (\tfrac{1}{\alpha^2}-1)^{1/2}.
$
This means that for any $x, y \in E$ with $x\neq y,$ we have
$$
\dfrac{1}{K} \leq \dfrac{\|G(x)-G(y)\|}{\| x-y\|}  \leq K.
$$
Therefore $G$ is a biLipschitz mapping.
\end{proof}

\begin{rem} 
{\em $(i)$ If $X=\R,$ then the strongly biLipschitz functions are exactly the strictly increasing biLipschitz functions.
\item[] $(ii)$ If $G:E \to X$ is such that $ \mbox{SBilip}(G) =1,$ then $G$ is the restriction of a translation.
\item[] $(iii)$ If $G:E \to X$ is an isometry such that
 $$
\alpha :=\inf_{x,y\in E;\, x \neq y} \dfrac{\langle x-y,G(x)-G(y)\rangle}{\|x-y\|^2} >0,
$$
then $G$ is a strongly biLipschitz function with $\mbox{SBilip}(G) =\alpha$. However the composition of strongly biLipschitz isometries need not be strongly biLipschitz (for instance, if $r:\R^2\to\R^2$ is defined by $r(z)=e^{\pi i/4}z$ then $r$ is strongly biLipschitz, but $r^2$ is not (and in fact $r^4=-\textrm{id}$ is not strongly biLipschitz locally on any disk).}
\end{rem}

\begin{thm}
Let $G:E \to X$ be a strongly biLipschitz mapping. Then $G$ extends to a strongly biLipschitz mapping on $X$ preserving the strongly bilipchitz constant $\SBilip(G).$ Moreover, if $\alpha= \SBilip(G),$ the formula
$$
\widetilde{G}(x):= P_2 \left( \nabla(\conv(g)) (x,0) \right) + \tfrac{1}{\alpha} x, \quad x\in X; \quad \text{where}
$$
$$
P_2(x,y)=y, \quad (x,y) \in X \times X, \quad \text{and}
$$
$$
g(x,y) = \inf_{z \in E} \left\lbrace \langle G(z), y \rangle-\tfrac{1}{\alpha} \langle z,y \rangle + \tfrac{\sqrt{1-\alpha^2}}{2 \alpha} \|x-z\|^2 \right\rbrace + \tfrac{\sqrt{1-\alpha^2}}{\alpha} \left( \tfrac{1}{2}\|x\|^2 +  \|y \|^2 \right) , \quad (x,y) \in X \times X,
$$
defines such an extension.
\end{thm}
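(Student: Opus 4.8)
The plan is to reduce the statement to the explicit Lipschitz extension formula of Theorem~\ref{maintheoremkirszbraunformula} by subtracting off a suitable multiple of the identity. Write $\alpha=\SBilip(G)$; note that $0<\alpha\le 1$, because by the Cauchy--Schwarz and arithmetic--geometric mean inequalities the ratio defining $\SBilip(G)$ never exceeds $1$. Set
$$
M:=\frac{\sqrt{1-\alpha^2}}{\alpha}, \qquad H(z):=G(z)-\frac{1}{\alpha}\,z \quad (z\in E).
$$
The first step is the observation that $H$ is $M$-Lipschitz on $E$: this is contained in the proof of the Proposition above, where the equivalence of \eqref{1.2} and \eqref{1.3} says precisely that
$$
\|H(x)-H(y)\|^2=\Big\|G(x)-G(y)-\frac{1}{\alpha}(x-y)\Big\|^2\le\frac{1-\alpha^2}{\alpha^2}\|x-y\|^2=M^2\|x-y\|^2
$$
for all $x,y\in E$.

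Next I would apply Theorem~\ref{maintheoremkirszbraunformula} with $Y=X$ to the $M$-Lipschitz mapping $H:E\to X$. (The constant there may be taken to be any number $\ge\lip(H)$: the proof only uses that the term $\tfrac{M}{4}\|x-y\|_X^2-\tfrac{1}{4M}\|H(x)-H(y)\|_X^2$ appearing in condition $(W^{1,1})$ for the associated jet is nonnegative, which holds whenever $\lip(H)\le M$.) This produces a mapping $\widetilde H:X\to X$ with $\widetilde H=H$ on $E$ and $\lip(\widetilde H)\le M$, namely
$$
\widetilde H(x)=P_2\big(\nabla(\conv(g))(x,0)\big),\qquad P_2(x,y):=y,
$$
where
$$
g(x,y)=\inf_{z\in E}\left\{\langle H(z),y\rangle+\frac{M}{2}\|x-z\|^2\right\}+\frac{M}{2}\|x\|^2+M\|y\|^2,\qquad(x,y)\in X\times X.
$$
I then set $\widetilde G(x):=\widetilde H(x)+\tfrac{1}{\alpha}\,x$ for $x\in X$; since $\widetilde H=H$ on $E$, we have $\widetilde G=G$ on $E$.

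Then I would check that $\SBilip(\widetilde G)=\alpha$. For any $x,y\in X$ one has $\widetilde G(x)-\widetilde G(y)-\tfrac1\alpha(x-y)=\widetilde H(x)-\widetilde H(y)$, hence
$$
\Big\|\widetilde G(x)-\widetilde G(y)-\frac1\alpha(x-y)\Big\|^2\le M^2\|x-y\|^2=\frac{1-\alpha^2}{\alpha^2}\|x-y\|^2,
$$
which is exactly inequality \eqref{1.3} for the pair $x,y$ with $\widetilde G$ in place of $G$. By the equivalence $\eqref{1.2}\Leftrightarrow\eqref{1.3}$ this yields $2\langle x-y,\widetilde G(x)-\widetilde G(y)\rangle\ge\alpha\big(\|x-y\|^2+\|\widetilde G(x)-\widetilde G(y)\|^2\big)$ for all $x,y\in X$, so $\SBilip(\widetilde G)\ge\alpha$. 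The reverse inequality is immediate, as $\widetilde G$ extends $G$ and the infimum defining $\SBilip(\widetilde G)$ runs over a set containing the one defining $\SBilip(G)$. Hence $\SBilip(\widetilde G)=\alpha$, and $\widetilde G$ is strongly biLipschitz.

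Finally I would read off the announced formula. Substituting $H(z)=G(z)-\tfrac1\alpha z$ and $M=\tfrac{\sqrt{1-\alpha^2}}{\alpha}$ into $g$, and regrouping $\tfrac{M}{2}\|x\|^2+M\|y\|^2=\tfrac{\sqrt{1-\alpha^2}}{\alpha}\big(\tfrac12\|x\|^2+\|y\|^2\big)$, turns $g$ into precisely the function displayed in the statement; and then
$$
\widetilde G(x)=\widetilde H(x)+\frac1\alpha x=P_2\big(\nabla(\conv(g))(x,0)\big)+\frac1\alpha x,
$$
as claimed. I do not expect a genuine obstacle here: the whole argument is a linear change of variables together with the equivalence $\eqref{1.2}\Leftrightarrow\eqref{1.3}$ already proved above. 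The only points deserving a word of care are that an arbitrary constant $M\ge\lip(H)$ is admissible in Theorem~\ref{maintheoremkirszbraunformula} (it is, as noted) and the boundary case $\alpha=1$, in which $M=0$, $H$ is constant on $E$, and the formula simply returns a translate of the identity, consistent with part~(ii) of the Remark above.
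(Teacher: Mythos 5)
Your proposal is correct and follows essentially the same route as the paper's proof: shift by $\tfrac{1}{\alpha}I$ using the equivalence of \eqref{1.2} and \eqref{1.3}, apply Theorem~\ref{maintheoremkirszbraunformula} to the resulting Lipschitz map $G-\tfrac{1}{\alpha}I$, shift back, and use the same equivalence to read off $\SBilip(\widetilde G)=\alpha$. The extra clarifications you add (that $0<\alpha\le 1$, that any constant $M\ge\lip(H)$ is admissible in the Kirszbraun formula, and the degenerate case $\alpha=1$) are accurate and slightly more careful than the paper's phrasing, but do not alter the argument.
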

\begin{proof}
We know from the characterization \eqref{1.3} that $G-\tfrac{1}{\alpha} I$ is Lipschitz on $E$ with $\textrm{Lip} \left( G-\tfrac{1}{\alpha} I \right) \leq \sqrt{\frac{1-\alpha^2}{\alpha^2}}.$ By Theorem \ref{maintheoremkirszbraunformula}, the mapping $T: X \to X$ defined as 
$$
T(x):= P_2 \left( \nabla(\textrm{conv}(g))  (x,0) \right) \quad x\in X; \quad \text{where}
$$
$$
P_2(x,y)=y \quad \text{for every} \quad (x,y) \in X \times X, \quad \text{and}
$$
$$
g(x,y) = \inf_{z \in E} \left\lbrace \langle G(z), y \rangle-\tfrac{1}{\alpha} \langle z,y \rangle + \tfrac{\sqrt{1-\alpha^2}}{2 \alpha} \|x-z\|^2 \right\rbrace + \tfrac{\sqrt{1-\alpha^2}}{\alpha} \left( \tfrac{1}{2}\|x\|^2 +  \|y \|^2 \right) , \quad (x,y) \in X \times X,
$$
is an extension of $G-\tfrac{1}{\alpha} I$ to all of $X$ such that $\textrm{Lip}(T) =\textrm{Lip} \left( G-\tfrac{1}{\alpha}I \right) \leq \sqrt{\frac{1-\alpha^2}{\alpha^2}}.$ Therefore, if we define the function $\widetilde{G} = T + \tfrac{1}{\alpha} I,$ we have that
$$
\left \| \widetilde{G} (x)-\widetilde{G} (y)- \tfrac{1}{\alpha}(x-y) \right \|^2 = \| T(x)-T(y)\|^2 \leq  \tfrac{1-\alpha^2}{\alpha^2} \| x-y\|^2  \quad \text{for all} \quad x,y\in X.
$$
We obtain from \eqref{1.3} that $\widetilde{G} $ is strongly biLipschitz on $X$ with $\textrm{SBilip}(\widetilde{G})= \alpha.$ Also, since $T$ is an extension of $G-\tfrac{1}{\alpha} I,$ it is obvious that $\widetilde{G}$ is an extension of $G.$ 
\end{proof}

\medskip

\section{$C^{1,1}$ strongly convex functions}

In this section we characterize the $1$-jets which can be interpolated by strongly convex functions of class $C^{1,1}$ in Hilbert spaces. A function $F:X \to \R$ is strongly convex if $F-c\| \cdot\|^2$ is convex for some $c>0.$ In Proposition \ref{remarkpropertiesScw11} below we will see that the gradient of a $C^{1,1}$ strongly convex function is a biLipschitz mapping. These functions arise naturally when studying smooth manifolds of positive curvature as well as in problems involving Monge-Amp\'{e}re equations. See the papers \cite{GhomiJDG2001,GhomiPAMS2002,MinYan} for some results and problems involving smooth strongly convex functions.  

\medskip

Throughout this section $X$ denotes a Hilbert space with norm and inner product denoted by $\langle \cdot, \cdot \rangle$ and $\| \cdot \|$ respectively. 

\begin{definition}\label{definitionstronglyconvexjets}
{\em
Let $E\subseteq X$ be arbitrary, $(f,G):E \to \R \times X$ be a $1$-jet and $c \in \R ,\: M >0$ constants such that $M>c.$ We say that $(f,G)$ satisfies condition $(SCW^{1,1})$ with constants $(c,M)$ provided that
$$
f(x) \geq f(y)+ \langle G (y),x-y \rangle + \tfrac{c}{M-c} \langle G(x)-G(y), y-x \rangle + \tfrac{cM}{2(M-c)} \| x-y\|^2 + \tfrac{1}{2(M-c)}\| G(x)- G(y)\|^2
$$
for every $x,y\in E.$}
\end{definition}

\begin{proposition}\label{remarkpropertiesScw11} {\em
Assume that $(f,G):E \to \R \times X$ satisfies condition $(SCW^{1,1})$ with constants $(c,M).$ Then, the following properties hold.
\item[] $(i)$ For every $x,y \in E$ we have
$$
(c+M)\langle G(x)-G(y), x-y \rangle \geq  cM \| x-y\|^2+\| G(x)- G (y)\|^2.
$$
\item[] $(ii)$ $G$ is Lipschitz with $c \leq \lip(G) \leq M.$

\item[] $(iii)$ If $c>0,$ then $G$ is strongly biLipschitz with $\textrm{SBilip}(G) \geq \frac{2}{c+M} \min \lbrace 1, c M \rbrace.$ 

\item[] $(iv)$ For $c=-M$ we recover Wells's condition $W^{1,1}$ considered in \cite{Wells, LeGruyer1, ALMExplicit}. For $c=0$, $(SCW^{1,1})$ is just condition $(CW^{1,1})$ of \cite{AM, ALMExplicit}. For $c\in (0, M]$ we have what can be called a $C^{1,1}$ strongly convex $1$-jet, which in the extreme case $c=M$ becomes the restriction of a quadratic function to $E$.}
\end{proposition}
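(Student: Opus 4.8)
The plan is to prove (i) first, from which (ii)--(iv) all drop out in a line or two. The key is to symmetrize condition $(SCW^{1,1})$.

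For (i), I would write $(SCW^{1,1})$ for the ordered pair $(x,y)$ and again for the pair $(y,x)$ and add the two inequalities. The terms $f(x)$ and $f(y)$ cancel, the quantities $\|x-y\|^2$ and $\|G(x)-G(y)\|^2$ double, and, using $\langle G(y),x-y\rangle+\langle G(x),y-x\rangle=-\langle G(x)-G(y),x-y\rangle$ together with $\langle G(y)-G(x),x-y\rangle=\langle G(x)-G(y),y-x\rangle$, the remaining ``cross'' contributions collapse to a single multiple of $\langle G(x)-G(y),x-y\rangle$ whose coefficient simplifies to $-\tfrac{M+c}{M-c}$. Multiplying through by $M-c>0$ then leaves exactly $(M+c)\langle G(x)-G(y),x-y\rangle\ge cM\|x-y\|^2+\|G(x)-G(y)\|^2$, which is (i). The only thing to watch is the bookkeeping of the coefficients $\tfrac{c}{M-c},\tfrac{cM}{2(M-c)},\tfrac{1}{2(M-c)}$. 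It will also be convenient to record the equivalent ``two-sphere'' form obtained by completing the square: (i) holds if and only if $\big\|G(x)-G(y)-\tfrac{M+c}{2}(x-y)\big\|^2\le\big(\tfrac{M-c}{2}\big)^2\|x-y\|^2$.

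For (ii) and (iii), both follow at once from (i). Taking square roots in the two-sphere form gives $\big\|G(x)-G(y)-\tfrac{M+c}{2}(x-y)\big\|\le\tfrac{M-c}{2}\|x-y\|$ (using $M-c>0$), so, since $M+c\ge 0$ on the relevant range $-M\le c\le M$ (cf.\ (iv)), the triangle inequality yields $c\|x-y\|\le\|G(x)-G(y)\|\le M\|x-y\|$ for all $x,y\in E$; hence $G$ is $M$-Lipschitz and $\lip(G)\ge c$ (vacuous when $c\le 0$), which is (ii). For (iii) we have $c>0$, hence $M+c>0$; rewriting (i) as $\langle x-y,G(x)-G(y)\rangle\ge\tfrac{1}{M+c}\big(cM\|x-y\|^2+\|G(x)-G(y)\|^2\big)$ and observing that $\tfrac{cM\|x-y\|^2+\|G(x)-G(y)\|^2}{\|x-y\|^2+\|G(x)-G(y)\|^2}$ is a convex combination of $cM$ and $1$, hence at least $\min\{1,cM\}$, one gets $\tfrac{2\langle x-y,G(x)-G(y)\rangle}{\|x-y\|^2+\|G(x)-G(y)\|^2}\ge\tfrac{2}{M+c}\min\{1,cM\}$ for every $x\ne y$; the denominator is positive because $\|G(x)-G(y)\|\ge c\|x-y\|>0$ by (ii), and the right-hand side is strictly positive since $c,M>0$. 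Taking the infimum over $x\ne y$ gives $\SBilip(G)\ge\tfrac{2}{M+c}\min\{1,cM\}>0$.

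For (iv), these are direct substitutions plus one use of (i). Setting $c=-M$ makes $M-c=2M$ and the three coefficients become $-\tfrac12,-\tfrac M4,\tfrac1{4M}$; after regrouping $\langle G(y),x-y\rangle-\tfrac12\langle G(x)-G(y),y-x\rangle=\tfrac12\langle G(x)+G(y),x-y\rangle$ the inequality becomes precisely \eqref{inequalityw11}. Setting $c=0$ annihilates the two middle terms and leaves $f(x)\ge f(y)+\langle G(y),x-y\rangle+\tfrac1{2M}\|G(x)-G(y)\|^2$, which is condition $(CW^{1,1})$ of \cite{AM, ALMExplicit}. At the extreme value $c=M$ (the boundary of the admissible range), (i), in its denominator-cleared form, degenerates to $\|G(x)-G(y)-M(x-y)\|^2\le 0$, forcing $G(x)=Mx+b$ on $E$ for a fixed $b\in X$; substituting this back into $(SCW^{1,1})$ one checks that the condition collapses to $f(x)-Q(x)\ge f(y)-Q(y)$ with $Q(x)=\tfrac M2\|x\|^2+\langle b,x\rangle$, and symmetrizing shows $f-Q$ is constant on $E$, so $(f,G)$ is the restriction to $E$ of a quadratic function.

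The main obstacle is essentially cosmetic: once (i) is established, (ii)--(iv) are immediate. The only real care needed is the algebraic simplification of (i) --- in particular using $M-c>0$ to clear the denominators --- together with the observation that $M+c$ can fail to be nonnegative only at the endpoint $c=-M$ (Wells's condition), which is harmless.
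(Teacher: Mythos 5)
Your proposal is correct and follows essentially the same route as the paper's proof: symmetrize $(SCW^{1,1})$ by adding the $(x,y)$ and $(y,x)$ instances to get (i), then read off (ii) from the resulting quadratic constraint in $\lambda=\|G(x)-G(y)\|/\|x-y\|$, and (iii) and (iv) are direct consequences. Two small remarks. First, for (ii) the paper jumps straight to $\lambda^2-(c+M)\lambda+cM\le 0$; this step tacitly uses Cauchy--Schwarz in the form $(c+M)\langle G(x)-G(y),x-y\rangle\le (c+M)\|G(x)-G(y)\|\|x-y\|$, which requires $c+M\ge 0$. You rightly flag this (and your ``two-sphere'' reformulation of (i) makes the dependence on $c+M\ge 0$ transparent); this hypothesis is indeed implicit throughout the section, since the condition is only meaningful on the range $-M\le c< M$ (it degenerates to Wells at $c=-M$ and to a quadratic jet in the limit $c\to M$). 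Second, your treatment of (iii) is actually more explicit than the paper's one-line ``It follows immediately from (i)'': the observation that the ratio $\bigl(cM\|x-y\|^2+\|G(x)-G(y)\|^2\bigr)/\bigl(\|x-y\|^2+\|G(x)-G(y)\|^2\bigr)$ is a convex combination of $cM$ and $1$ is exactly the right way to extract the bound $\frac{2}{c+M}\min\{1,cM\}$, and it is worth recording.
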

\begin{proof} 
$(i)$ Let $x,y\in E.$ By summing the inequalities
$$
f(x) \geq f(y)+ \langle G (y),x-y \rangle + \tfrac{c}{M-c} \langle G(x)-G(y), y-x \rangle + \tfrac{cM}{2(M-c)} \| x-y\|^2 + \tfrac{1}{2(M-c)}\| G(x)- G(y)\|^2
$$
$$
f(y) \geq f(x)+ \langle G (x),y-x \rangle + \tfrac{c}{M-c} \langle G(y)-G(x), x-y \rangle + \tfrac{cM}{2(M-c)} \| x-y\|^2 + \tfrac{1}{2(M-c)}\| G(x)- G(y)\|^2
$$
we obtain
$$
0\geq \left( 1+ \tfrac{2c}{M-c} \right) \langle G(x)-G(y), y-x \rangle + \tfrac{cM}{M-c} \| x-y\|^2 + \tfrac{1}{M-c} \| G(x)- G(y)\|^2,
$$
which is equivalent to the desired estimation.

\item[] $(ii)$ Let $x,y\in E$ be such that $x \neq y.$ Writing $ \lambda = \| G(x)- G(y)\|/ \| x-y\|,$ the inequality in $(i)$ yields $\lambda^2-(c+M)\lambda + cM \leq 0,$ which in turn implies $ c \leq \lambda \leq M.$

\item[] $(iii)$ It follows immediately from $(i)$ and Definition \ref{defintionstronglybilipschitz}.
\end{proof}

We say that a $1$-jet $(f,G):E \to \R \times X$ satisfies condition $(CW^{1,1})$ with constant $M>0$ on $E$ provided that
$$
f(x) \geq f(y)+\langle G(y),x-y \rangle + \tfrac{1}{2M}\| G(x)-G(y)\|^2 \quad \text{for every} \quad x,y\in E.
$$

In \cite[Theorem 2.4]{ALMExplicit} it was shown that $(CW^{1,1})$ is a necessary and sufficient condition on $(f,G)$ for the existence of a $C^{1,1}(X)$ convex extension $F$ of $f$ with $\nabla F=G$ on $E.$ 

\begin{lemma}\label{lemmastronglyconvexconvex}
{\em
The $1$-jet $(f,G)$ satisfies $(SCW^{1,1})$ with constants $(c,M)$ on $E$ if and only if the $1$-jet $(\widetilde{f},\widetilde{G}) = ( f- \tfrac{c}{2}  \| \cdot\|^2, G -c I)$ satisfies condition $(CW^{1,1})$ with constant $M-c$ on $E.$ }
\end{lemma}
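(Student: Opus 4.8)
The plan is to reduce condition $(SCW^{1,1})$ for the jet $(f,G)$ to condition $(CW^{1,1})$ for the shifted jet $(\widetilde f,\widetilde G)=(f-\tfrac c2\|\cdot\|^2,\,G-cI)$ by a direct algebraic substitution, since both conditions are pointwise inequalities quantified over all pairs $x,y\in E$. First I would write down the target inequality: $(\widetilde f,\widetilde G)$ satisfies $(CW^{1,1})$ with constant $M-c$ means that for all $x,y\in E$,
$$
\widetilde f(x)\ \geq\ \widetilde f(y)+\langle\widetilde G(y),x-y\rangle+\tfrac{1}{2(M-c)}\|\widetilde G(x)-\widetilde G(y)\|^2 .
$$
Then I would substitute $\widetilde f(x)=f(x)-\tfrac c2\|x\|^2$, $\widetilde f(y)=f(y)-\tfrac c2\|y\|^2$, $\widetilde G(y)=G(y)-cy$, and $\widetilde G(x)-\widetilde G(y)=(G(x)-G(y))-c(x-y)$, and expand everything in terms of $f(x),f(y),G(x),G(y),x,y$.

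The key computation is to verify that, after expansion, the resulting inequality is term-for-term identical to condition $(SCW^{1,1})$ with constants $(c,M)$, namely
$$
f(x)\ \geq\ f(y)+\langle G(y),x-y\rangle+\tfrac{c}{M-c}\langle G(x)-G(y),y-x\rangle+\tfrac{cM}{2(M-c)}\|x-y\|^2+\tfrac{1}{2(M-c)}\|G(x)-G(y)\|^2 .
$$
Concretely I expect the following to fall out: the quadratic terms $-\tfrac c2\|x\|^2+\tfrac c2\|y\|^2$ together with the $-c\langle y,x-y\rangle$ coming from $\langle\widetilde G(y),x-y\rangle$ combine into $-\tfrac c2\|x-y\|^2$; expanding $\tfrac{1}{2(M-c)}\|(G(x)-G(y))-c(x-y)\|^2$ produces the cross term $-\tfrac{c}{M-c}\langle G(x)-G(y),x-y\rangle = \tfrac{c}{M-c}\langle G(x)-G(y),y-x\rangle$, the term $\tfrac{1}{2(M-c)}\|G(x)-G(y)\|^2$, and the term $\tfrac{c^2}{2(M-c)}\|x-y\|^2$; and finally $-\tfrac c2\|x-y\|^2+\tfrac{c^2}{2(M-c)}\|x-y\|^2 = \tfrac{-c(M-c)+c^2}{2(M-c)}\|x-y\|^2 = \tfrac{cM}{2(M-c)}\|x-y\|^2$ after rearranging to the side of $f(x)$, matching the $(SCW^{1,1})$ coefficient. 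Since every step is a reversible algebraic identity (no inequalities are weakened anywhere, and $M-c>0$ guarantees the divisions are legitimate), the equivalence holds in both directions.

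The proof is therefore essentially a bookkeeping exercise; there is no genuine obstacle, only the risk of a sign or coefficient slip in collecting the $\|x-y\|^2$ terms and the cross terms. I would double-check the computation by specializing: at $c=0$ the shifted jet is $(f,G)$ itself and $(SCW^{1,1})$ with $(0,M)$ must reduce to $(CW^{1,1})$ with constant $M$, which is visibly the case; and at $c=-M$ one recovers Wells's $(W^{1,1})$, providing a second sanity check consistent with part $(iv)$ of Proposition \ref{remarkpropertiesScw11}. I would present the argument as a single chain of equalities transforming one inequality into the other, remarking that all manipulations are equivalences, so that the "if and only if" is immediate.
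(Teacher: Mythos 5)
Your approach --- substitute the shifted jet $(\widetilde f,\widetilde G)=(f-\tfrac{c}{2}\|\cdot\|^2,\,G-cI)$ into the $(CW^{1,1})$ inequality, expand, regroup, and note that every step is a reversible identity so the equivalence follows --- is exactly the paper's proof, which simply writes out the forward and reverse substitutions as two separate chains of equalities. The one thing to tidy is the sign in your $\|x-y\|^2$ bookkeeping: once all terms are collected on the $\widetilde f(x)$ side, both quadratic contributions carry a minus sign, so the combination is $-\tfrac{c}{2}\|x-y\|^2-\tfrac{c^2}{2(M-c)}\|x-y\|^2=-\tfrac{cM}{2(M-c)}\|x-y\|^2$ (which flips to $+\tfrac{cM}{2(M-c)}\|x-y\|^2$ after moving to the $f(x)$ side), whereas the expression $\tfrac{-c(M-c)+c^2}{2(M-c)}$ you wrote simplifies to $\tfrac{2c^2-cM}{2(M-c)}$, not $\tfrac{cM}{2(M-c)}$.
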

\begin{proof}
Assume first that $(f,G)$ satisfies $(SCW^{1,1})$ with constants $(c,M)$ on $E.$ We have 
\begin{align*}
& \widetilde{f}(x)  -\widetilde{f}(y)-\langle \widetilde{G}(y),x-y \rangle -\tfrac{1}{2(M-c)} \| \widetilde{G}(x)-\widetilde{G}(y)\|^2  = f(x) -f(y)-\langle G (y),x-y \rangle - \tfrac{c}{2}\| x-y\|^2  \\
  &   - \tfrac{1}{2(M-c)} \left( \| G(x)-G(y)\|^2 + c^2\| x-y\|^2 + 2c \langle G(x)-G(y), y-x \rangle \right) = f(x) - f(y)- \langle G (y),x-y \rangle \\
&  - \tfrac{c}{M-c} \langle G(x)-G(y), y-x \rangle - \tfrac{cM}{2(M-c)} \| x-y\|^2 - \tfrac{1}{2(M-c)}\| G(x)- G(y)\|^2 \geq 0.
\end{align*}
Conversely, if $(\widetilde{f},\widetilde{G}) = ( f- \tfrac{c}{2}  \| \cdot\|^2, G -c I)$ satisfies condition $(CW^{1,1})$ with constant $M-c,$ we can write
\begin{align*}
 f(x)&  - f(y)-\langle G (y),x-y \rangle - \tfrac{c}{M-c} \langle G(x)-G(y), y-x \rangle - \tfrac{cM}{2(M-c)} \| x-y\|^2 - \tfrac{1}{2(M-c)}\| G(x)- G(y)\|^2 \\
& =\widetilde{f}(x)  -\widetilde{f}(y)-\langle \widetilde{G}(y),x-y \rangle + \tfrac{c}{2} \| x-y\|^2 - \tfrac{c}{M-c} \langle \widetilde{G}(x)-\widetilde{G}(y), y-x \rangle + \tfrac{c^2}{M-c} \|x-y\|^2 \\
& \quad \:  - \tfrac{cM}{2(M-c)} \| x-y\|^2 -  \tfrac{1}{2(M-c)} \left( \| \widetilde{G}(x)-\widetilde{G}(y) \| ^2 + c^2 \|x-y\|^2 + 2c \langle \widetilde{G}(x)-\widetilde{G}(y) , x-y \rangle \right) \\
& = \widetilde{f}(x)  -\widetilde{f}(y)-\langle \widetilde{G}(y),x-y \rangle - \tfrac{1}{2(M-c)}  \| \widetilde{G}(x)-\widetilde{G}(y) \| ^2 \geq 0.
\end{align*}

\end{proof}

\begin{proposition}\label{propositiongeneralpropertiesstronglyconvex}
{\em Let $F\in C^{1,1}(X)$ be such that $\lip(\nabla F) \leq M$ and $g: = F-\frac{c}{2} \| \cdot \|^2$ is a convex function, where $c\in \R, \: M>0$ are two constants (if $c>0,$ we call such a function $F$ a $C^{1,1}$ (globally) strongly convex function). Then the following properties hold.

\item[] $(i)$ We have $M \geq c$ and if $M=c,$ then $F$ is a quadratic function.

\item[] $(ii)$ $g$ is of class $C^{1,1}(X)$ with $\lip(\nabla g) \leq M-c.$

\item[] $(iii)$ Assume that $M>c.$ For every $x,y \in X$ we have
$$
F(x) \geq F(y)+ \langle \nabla F(y),x-y \rangle + \tfrac{c}{M-c} \langle \nabla F(x)-\nabla F(y), y-x \rangle + \tfrac{cM}{2(M-c)} \| x-y\|^2 + \tfrac{1}{2(M-c)}\| \nabla F(x)- \nabla F(y)\|^2.
$$
Therefore, according to Definition \ref{definitionstronglyconvexjets}, $(F, \nabla F)$ satisfies condition $(SCW^{1,1})$ on $X$ with constants $(c,M).$}
\end{proposition}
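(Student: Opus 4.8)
The plan is to reduce all three assertions to two convexity facts about $g$ and then read off the conclusions from the interpolation theorems already at our disposal. The starting point is the algebraic identity
$$\tfrac{M}{2}\|\cdot\|^2 - F = \tfrac{M-c}{2}\|\cdot\|^2 - g.$$
Its left‑hand side is convex: the gradient $MI-\nabla F$ is monotone, since $\langle Mx-\nabla F(x)-My+\nabla F(y),\,x-y\rangle = M\|x-y\|^2-\langle \nabla F(x)-\nabla F(y),x-y\rangle \ge M\|x-y\|^2 - \|\nabla F(x)-\nabla F(y)\|\,\|x-y\| \ge 0$ by Cauchy--Schwarz and $\lip(\nabla F)\le M$. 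Thus $g$ is convex (by hypothesis) and $\tfrac{M-c}{2}\|\cdot\|^2-g$ is convex; note that these are genuine real-valued $C^1$ functions on $X$, because $F$, and hence $g$, is continuous.

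For $(i)$: monotonicity of $\nabla g$ together with monotonicity of $(M-c)I-\nabla g$ gives $0\le \langle \nabla g(x)-\nabla g(y),x-y\rangle \le (M-c)\|x-y\|^2$ for all $x,y\in X$, forcing $M\ge c$; and if $M=c$ then $g$ is at once convex and concave, hence affine, so $F=g+\tfrac{c}{2}\|\cdot\|^2$ is a quadratic function. For $(ii)$: the case $M=c$ is then immediate, so assume $M>c$; combining convexity of $h:=\tfrac{M-c}{2}\|\cdot\|^2-g$ (whence $h(x+v)+h(x-v)-2h(x)\ge 0$) with the parallelogram identity $\|x+v\|^2+\|x-v\|^2=2\|x\|^2+2\|v\|^2$ yields $g(x+v)+g(x-v)-2g(x)\le (M-c)\|v\|^2$ for all $x,v\in X$. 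Since $g$ is convex, continuous and proper, $\conv(g)=g$, so \cite[Theorem 2.3]{ALMExplicit} gives $g\in C^{1,1}(X)$ with $\lip(\nabla g)\le M-c$.

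For $(iii)$, assume $M>c$. By $(ii)$, $g$ is a $C^{1,1}(X)$ convex function with $\lip(\nabla g)\le M-c$; in particular it is a $C^{1,1}$ convex extension of the $1$-jet $(g,\nabla g)$ regarded as defined on $E=X$, so the necessity part of \cite[Theorem 2.4]{ALMExplicit} shows that $(g,\nabla g)$ satisfies $(CW^{1,1})$ with constant $M-c$ on $X$. Since $(g,\nabla g)=\big(F-\tfrac{c}{2}\|\cdot\|^2,\ \nabla F-cI\big)$, Lemma \ref{lemmastronglyconvexconvex} applied with $E=X$ then yields that $(F,\nabla F)$ satisfies $(SCW^{1,1})$ with constants $(c,M)$ on $X$; writing this out according to Definition \ref{definitionstronglyconvexjets} is exactly the displayed inequality of $(iii)$.

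The only non-formal point is obtaining the \emph{sharp} constant $M-c$ in $(ii)$: upgrading the one-sided bound ``$\tfrac{M-c}{2}\|\cdot\|^2-g$ is convex'' to ``$\nabla g$ is $(M-c)$-Lipschitz'' is a Baillon--Haddad-type phenomenon, which here is already packaged in \cite[Theorem 2.3]{ALMExplicit}. Everything else is bookkeeping with the interpolation theorems already established.
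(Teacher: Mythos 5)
Your proof is correct and follows essentially the same route as the paper: decompose via $F=g+\tfrac{c}{2}\|\cdot\|^2$, obtain the two-sided bound on the second-order difference of $g$ (equivalently, monotonicity of both $\nabla g$ and $(M-c)I-\nabla g$) to conclude $(i)$ and $(ii)$, then pass through $(CW^{1,1})$ for $(g,\nabla g)$ via \cite[Theorem 2.4]{ALMExplicit} and Lemma \ref{lemmastronglyconvexconvex} for $(iii)$. The only cosmetic difference is that for $(i)$ you phrase the argument in terms of monotonicity of gradients where the paper works directly with the parallelogram identity and second-order differences, and for $(ii)$ you make explicit the appeal to \cite[Theorem 2.3]{ALMExplicit}, which the paper leaves implicit in the phrase ``again by convexity of $g$.''
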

\begin{proof}

\item[] $(i)$ and $(ii)$ Given $x,h\in X,$ the fact that $\lip(\nabla F) \leq M$ and the convexity of $g$ yield
\begin{align*}
0 & \leq g(x+h)+g(x-h)-2g(x)  \\
& = F(x+h)+F(x-h)-2F(x) - \tfrac{c}{2} \left( \| x+h\|^2+\| x-h\|^2-2\|x\|^2 \right) \leq (M -c)\| h\|^2.
\end{align*}
This shows that $c\leq M$ and, again by convexity of $g,$ that $\lip(\nabla g) \leq M-c.$ Finally observe that if $M=c,$ then $g$ is affine and therefore $F$ is a quadratic function.

\item[] $(iii)$ By virtue of $(ii),$ the convexity of $g$ implies that $(g,\nabla g)$ satisfies condition $(CW^{1,1})$ with constant $M-c$ on $X.$ Thus Lemma \ref{lemmastronglyconvexconvex} gives the desired inequality.

\end{proof}

\begin{theorem}
Let $E$ be an arbitrary subset of a Hilbert space $X$, $f:E \to \R, \: G:E\to X $ be two functions, and $c\in \R, \: M>0$ be two constants. There exists a function $F \in C^{1,1}(X)$ such that $F=f,\: \nabla F= G$ on $E,$ $\lip(\nabla F) \leq M,$ and $F-\tfrac{c}{2} \| \cdot \|^2$ is a convex function if and only if the jet $(f,G)$ satisfies condition $(SCW^{1,1})$ with constants $(c,M)$ on $E.$ In fact, $F$ can be defined by means of the formula
\begin{align*}
& F=\conv(g)+ \tfrac{c}{2}\| \cdot \|^2,\\
& g(x) = \inf_{y \in E} \left\lbrace f(y)+\langle G(y), x-y \rangle + \tfrac{M}{2} \|x-y\|^2 \right\rbrace - \tfrac{c}{2}\|x\|^2 , \quad x\in X.
\end{align*}
Moreover, if $H$ is another function of class $C^{1,1}(X)$ satisfying the above properties, then $H \leq F$ on $X.$
\end{theorem}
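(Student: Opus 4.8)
The plan is to reduce the problem to the convex $C^{1,1}$ interpolation result already available in \cite{ALMExplicit}, by means of the quadratic shift $F\mapsto F-\tfrac{c}{2}\|\cdot\|^2$ isolated in Lemma \ref{lemmastronglyconvexconvex}, and then to verify that under this shift the known explicit convex formula turns into the formula in the statement. Throughout I would work in the relevant range $M>c$ (the limit case $M=c$ being the purely quadratic situation of Proposition \ref{remarkpropertiesScw11}(iv)). Necessity is then essentially already done: if $F\in C^{1,1}(X)$ interpolates $(f,G)$ on $E$ with $\lip(\nabla F)\le M$ and $F-\tfrac{c}{2}\|\cdot\|^2$ convex, then Proposition \ref{propositiongeneralpropertiesstronglyconvex}(iii) gives that $(F,\nabla F)$ satisfies $(SCW^{1,1})$ with $(c,M)$ on all of $X$, and restricting that inequality to $x,y\in E$, where $F=f$ and $\nabla F=G$, yields $(SCW^{1,1})$ for $(f,G)$ on $E$.

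For sufficiency and the formula, assume $(f,G)$ satisfies $(SCW^{1,1})$ with $(c,M)$; in particular $N:=M-c>0$. By Lemma \ref{lemmastronglyconvexconvex}, the jet $(\widetilde f,\widetilde G):=\bigl(f-\tfrac{c}{2}\|\cdot\|^2,\,G-cI\bigr)$ satisfies condition $(CW^{1,1})$ with constant $N$ on $E$. By the explicit convex extension result of \cite{ALMExplicit} (the explicit form of \cite[Theorem 2.4]{ALMExplicit}, obtained by applying \cite[Theorem 2.3]{ALMExplicit} to the infimum of parabolas below and checking that its convex envelope interpolates the jet), the function $\widetilde F:=\conv(\widetilde g)$ with
$$
\widetilde g(x)=\inf_{y\in E}\Bigl\{\widetilde f(y)+\langle \widetilde G(y),x-y\rangle+\tfrac{N}{2}\|x-y\|^2\Bigr\},\qquad x\in X,
$$
is convex, of class $C^{1,1}(X)$, with $\lip(\nabla\widetilde F)\le N$, and satisfies $\widetilde F=\widetilde f$, $\nabla\widetilde F=\widetilde G$ on $E$. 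Next I would carry out the (routine) algebra: substituting $\widetilde f(y)=f(y)-\tfrac c2\|y\|^2$, $\widetilde G(y)=G(y)-cy$ and $N=M-c$, and expanding $\langle G(y)-cy,x-y\rangle$ and $\tfrac{M-c}{2}\|x-y\|^2$, all the terms involving $c$ except $-\tfrac c2\|x\|^2$ cancel, so that $\widetilde g=g$, the function in the statement, and hence $\widetilde F=\conv(g)$. Setting $F:=\widetilde F+\tfrac c2\|\cdot\|^2=\conv(g)+\tfrac c2\|\cdot\|^2$ produces the claimed formula, and this $F$ has the required properties: $F-\tfrac c2\|\cdot\|^2=\widetilde F$ is convex; on $E$ one has $F=\widetilde f+\tfrac c2\|\cdot\|^2=f$ and $\nabla F=\widetilde G+cI=G$; and $\lip(\nabla F)\le M$ because both $\tfrac M2\|\cdot\|^2-F=\tfrac N2\|\cdot\|^2-\widetilde F$ and $\tfrac M2\|\cdot\|^2+F=\tfrac{M+c}{2}\|\cdot\|^2+\widetilde F$ are convex — the first from $\lip(\nabla\widetilde F)\le N$ and the convexity of $\widetilde F$, the second since $M+c\ge 0$ in the relevant range $-M\le c\le M$ (for $c=-M$ this is exactly Wells's condition).

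For the maximality clause, let $H\in C^{1,1}(X)$ be any function with $H=f$, $\nabla H=G$ on $E$, $\lip(\nabla H)\le M$ and $H-\tfrac c2\|\cdot\|^2$ convex. By Proposition \ref{propositiongeneralpropertiesstronglyconvex}(ii) the function $\widetilde H:=H-\tfrac c2\|\cdot\|^2$ is convex and of class $C^{1,1}$ with $\lip(\nabla\widetilde H)\le N$, and $\widetilde H=\widetilde f$, $\nabla\widetilde H=\widetilde G$ on $E$. Since $\lip(\nabla\widetilde H)\le N$, for every $x\in X$ and $y\in E$ one has $\widetilde H(x)\le\widetilde H(y)+\langle\nabla\widetilde H(y),x-y\rangle+\tfrac N2\|x-y\|^2=\widetilde f(y)+\langle\widetilde G(y),x-y\rangle+\tfrac N2\|x-y\|^2$; taking the infimum over $y\in E$ gives $\widetilde H\le g$ on $X$. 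As $\widetilde H$ is convex and continuous, hence lower semicontinuous, $\widetilde H=\conv(\widetilde H)\le\conv(g)$, and therefore $H=\widetilde H+\tfrac c2\|\cdot\|^2\le\conv(g)+\tfrac c2\|\cdot\|^2=F$, as claimed.

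I expect the only genuinely delicate points to be bookkeeping rather than conceptual: checking carefully that $\widetilde g=g$ (a short but slightly tedious expansion in which several $c$-terms have to cancel), and checking that reintroducing $\tfrac c2\|\cdot\|^2$ to the convex extension does not spoil $\lip(\nabla F)\le M$ — the one place where the interplay between $c$ and $M$ actually enters, via the two-sided convexity characterization of $C^{1,1}$ functions. The conceptual core, namely that strongly convex interpolation of $(f,G)$ is equivalent to convex interpolation of the shifted jet $(\widetilde f,\widetilde G)$, has already been packaged in Lemma \ref{lemmastronglyconvexconvex}.
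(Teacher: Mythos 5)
Your proposal follows the paper's proof essentially step for step: necessity from Proposition~\ref{propositiongeneralpropertiesstronglyconvex}(iii), sufficiency via the quadratic-shift correspondence of Lemma~\ref{lemmastronglyconvexconvex} combined with the convex $C^{1,1}$ extension formula of \cite[Theorem 2.4]{ALMExplicit}, the algebraic identification $\widetilde g=g$, and maximality by shifting $H$ to $\widetilde H$ and comparing with $\conv(g)$. The only variations are cosmetic — you re-derive two facts the paper instead cites, namely the bound $\lip(\nabla F)\le M$ (you use the two-sided convexity characterization, the paper invokes Proposition~\ref{remarkpropertiesScw11}(ii)) and the comparison $\widetilde H\le\conv(g)$ (you argue directly, the paper appeals to the maximality clause of \cite[Theorem 2.4]{ALMExplicit}) — so the argument is correct and takes the same route.
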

\begin{proof}
The necessity of the condition $(SCW^{1,1})$ with constants $(c,M)$ on the jet $(f,G)$ follows immediately from Proposition \ref{propositiongeneralpropertiesstronglyconvex} $(iii).$ Conversely, assume that $(f,G)$ satisfies condition $(SCW^{1,1})$ with constants $(c,M)$ on $E.$ By Lemma \ref{lemmastronglyconvexconvex}, the jet $(\widetilde{f},\widetilde{G}) = ( f- \tfrac{c}{2}  \| \cdot\|^2, G -c I)$ satisfies condition $(CW^{1,1})$ with constant $M-c$ and we can apply \cite[Theorem 2.4]{ALMExplicit} to obtain that
$$
 \widetilde{F}=\conv(g), \quad \text{where}
 $$
 $$
g(x) = \inf_{y \in E} \left\lbrace \widetilde{f}(y)+\langle \widetilde{G}(y), x-y \rangle + \tfrac{M-c}{2} \|x-y\|^2 \right\rbrace = \inf_{y \in E} \left\lbrace f(y)+\langle G(y), x-y \rangle + \tfrac{M}{2} \|x-y\|^2 \right\rbrace - \tfrac{c}{2}\| x\|^2
$$
is convex and of class $C^{1,1}(X)$ with $\lip(\nabla \widetilde{F}) \leq M-c$ and $(\widetilde{F},\nabla \widetilde{F}) = (\widetilde{f},\widetilde{G})$ on $E.$ If we consider the function $F:= \widetilde{F}+ \tfrac{c}{2} \| \cdot \|^2,$ Lemma \ref{lemmastronglyconvexconvex} says that $(F,\nabla F)$ satisfies condition $(SCW^{1,1})$ with constants $(c,M)$ on $X$ (because $(\widetilde{F},\nabla \widetilde{F})$ satisfies $(CW^{1,1})$ with constant $M-c$ on $X$) and $(F, \nabla F )=(f,G)$ on $E.$ It is obvious that $F-\tfrac{c}{2}\| \cdot \|^2$ is convex on $X$ and, by Remark \ref{remarkpropertiesScw11} $(ii),$ $\lip(\nabla F) \leq M.$ 

Finally, if $H$ is a function of class $C^{1,1}(X)$ such that $(H,\nabla H)=(f,G)$ on $E, \: \lip(\nabla H) \leq M$ and $\widetilde{H}:= H- \tfrac{c}{2}\| \cdot \|^2$ is convex, then it is easy to see (using the same calculations as in the proof of Proposition \ref{propositiongeneralpropertiesstronglyconvex} $(ii)$) that $\lip(\nabla \widetilde{H}) \leq M-c$, and obviously $( \widetilde{H},\nabla \widetilde{H})=(\widetilde{f},\widetilde{G})$ on $E.$ We thus have from \cite[Theorem 2.4]{ALMExplicit} that $\widetilde{H} \leq \widetilde{F}$ on $X,$ and therefore $H \leq F$ on $X.$ 
\end{proof}

\end{document}